\documentclass[leqno,11pt]{article}

\textwidth 18cm
\oddsidemargin -0.75cm
\addtolength{\textheight}{2cm}
\addtolength{\topmargin}{-2cm} \sloppy

\usepackage{amsmath}
\usepackage{epsfig}
\usepackage[utf8]{inputenc}
\usepackage{amssymb}
\usepackage[all]{xy}
\usepackage{color}


\newcommand{\qed}{\penalty 500\hfill$\square$\par\medskip}


\newcommand{\dd}{\mathrm{d}}

\newtheorem{theorem}{Theorem}
\newtheorem{proposition}{Proposition}
\newtheorem{definition}{Definition}
\newtheorem{lemma}{Lemma}
\newtheorem{corollary}{Corollary}

\begin{document}

\title{\Large{\bf A problem involving competing and intrinsic operators}}

\author{\sc
Aldo H. S. Medeiros
\thanks{Departamento de Matem\'{a}tica,
Universidade Federal de Viçosa, 36570-900 - Vi\c{c}osa - MG,
Brazil. E-mail: aldo.medeiros@ufv.br} \,\,\, Dumitru Motreanu
\thanks{Department of Mathematics, University of Perpignan, 66000 Perpignan, France.
E-mail: mtreanu@univ-perp.fr} }
\date{}

\maketitle

\setcounter{equation}{0}
\begin{abstract}

\end{abstract}

The main result establishes the existence of a solution in a
generalized sense for a nonlinear Dirichlet problem driven by a
competing operator and exhibiting a convection term composed with an
intrinsic operator. A finite dimensional approximation is
constructed. Applications regarding nonhomogeneous Dirichlet
problems and equations with convolution are given by choosing an
adequate intrinsic operator.

\

\noindent {\bf $2010$ Mathematics Subject Classification:} 35J92
47F10 35B45

\

\noindent {\bf Keywords:} $p$-Laplacian, competing operator,
convection, generalized solution, approximation, nonhomogeneous
boundary condition, convolution

\

\date{}

\maketitle

\section{Introduction and statement of main result}\label{S1}

Let $T:W_0^{1, p}(\Omega) \rightarrow W^{1, p}(\Omega)$ be a given
continuous map that we call intrinsic operator. For $1<q<p<+\infty$
and $p<N$, we state the Dirichlet problem
\begin{equation}\label{P1}
\begin{cases}-\Delta_p u + \Delta_q u=f(x, T(u), \nabla T(u)) & \text { in } \Omega \\
u=0 & \text { on } \partial \Omega\end{cases}
\end{equation}
on a bounded domain $\Omega \subset \mathbb{R}^N$ with a Lipschitz
boundary $\partial\Omega$. In the case where the operator $T$ is the
identity of $W_0^{1, p}(\Omega)$ we recover the problem stated in
\cite{M}.

In order to simplify the notation we make the convention that
corresponding to any $r\in (1,+\infty)$ we denote $r'=r/(r-1)$ (the
H\"older conjugate of $r$).  In \eqref{P1} we have the negative
$p$-Laplacian $-\Delta_p: W_0^{1, p}(\Omega) \rightarrow W^{-1,
p^{\prime}}(\Omega)$ given by
\begin{eqnarray*}
\left\langle-\Delta_p u, v\right\rangle=\int_{\Omega}|\nabla
u(x)|^{p-2} \nabla u(x) \cdot \nabla v(x) d x, \quad \forall u,v \in
W_0^{1, p}(\Omega),
\end{eqnarray*}
and the positive $q$-Laplacian $\Delta_q: W_0^{1, q}(\Omega)
\rightarrow W^{-1, q^{\prime}}(\Omega)$ defined by
\begin{eqnarray*}
\left\langle \Delta_q u, v\right\rangle=-\int_{\Omega}|\nabla
u(x)|^{q-2} \nabla u(x) \cdot \nabla v(x) d x, \quad \forall u,v \in
W_0^{1, q}(\Omega) .
\end{eqnarray*}
Since $p>q$, there is the continuous embedding $W_0^{1, p}(\Omega)
\hookrightarrow W_0^{1, q}(\Omega)$, so the sum of operators
$-\Delta_p + \Delta_q:W_0^{1, p}(\Omega) \rightarrow W^{-1,
p^{\prime}}(\Omega)$ is well defined . The case $q>p$ in problem
\eqref{P1} is obtained multiplying the equation by $-1$. The case
$N\leq p$ is simpler and can be handled along the same lines.

In view of the Sobolev and Rellich-Kondrachov embedding theorems, the
Sobolev space $W_0^{1, p}(\Omega)$ is compactly embedded into
$L^r(\Omega)$ for every $1 \leq r<p^*$ and continuously embedded for
$r=p^*$, where $p^*$ stands for the Sobolev critical exponent
$p^*=\frac{N p}{N-p}$ if $N>p$. Hence, for every $1 \leq r \leq p^*$
there exists a positive constant $S_r$ such that
\begin{equation}\label{eq1}
\|u\|_r \leq S_r\|u\|, \quad \forall u \in W_0^{1, p}(\Omega),
\end{equation}
where $\|u\|_r$ denotes the norm on $L^r(\Omega)$ and
\[
\Vert u \Vert := \Vert \nabla u \Vert_{p} =
\displaystyle\left(\int_{\Omega} \vert \nabla u \vert^{p} \dd
x\right)^{\frac{1}{p}}
\]
is the norm on $W_0^{1, p}(\Omega)$. The best constant in
\eqref{eq1} for $r=p$ is $S_p=\lambda_{1, p}^{-1 / p}$, where
$\lambda_{1, p}$ is the first eigenvalue of $-\Delta_p$, that is,
\begin{equation*}
\lambda_{1,p}=\inf_{u \in W_0^{1,p}(\Omega),u \neq 0} \frac{\|\nabla
u\|_p^p}{\|u\|_p^p}.
\end{equation*}

The right-hand side of the equation in problem \eqref{P1} is
composed of a Carath\'eodory function $f:\Omega \times \mathbb{R}
\times$ $\mathbb{R}^N \rightarrow \mathbb{R}$ (i.e., $f(x,s,\xi)$ is
measurable in $x\in\Omega$ and continuous in $(s,\xi)\in \mathbb{R}
\times$ $\mathbb{R}^N$) and the continuous map
$T:W_0^{1,p}(\Omega)\rightarrow W^{1, p}(\Omega)$. We assume that
the following conditions are satisfied:
\begin{enumerate}
\item[$(H1)$] There exist constants $a_1 \geq 0$, $a_2
\geq 0$, $\alpha \in\left(0, p^*-1\right)$, $\beta \in\left(0,
\frac{p}{\left(p^*\right)^{\prime}}\right)$, and a function
$\sigma\in L^{r^{\prime}}(\Omega)$ with $r\in\left[1, p^*\right)$
such that
$$
|f(x, s, \xi)| \leq \sigma_1(x)+a_1|s|^\alpha+a_2|\xi|^\beta
$$
for almost all $x \in \Omega$, all $s \in \mathbb{R}$ and $\xi \in
\mathbb{R}^N$.
\item[$(H2)$] There exist constants $K_1, K_2, K_3 \geq 0$ with
$$
a_1 K_1 S_{\frac{p^*}{p^*-\alpha}}+a_2 K_2 S_{\frac{p}{p-\beta}}< 1
$$
such that
$$
\|T(u)\|_{p^*}^\alpha \leq K_1\|\nabla u\|_p^{p-1}+K_3 \quad \text {
and } \quad\|\nabla T(u)\|_p^\beta \leq K_2\|\nabla u\|_p^{p-1}+K_3
$$
for all $u \in W_0^{1, p}(\Omega)$.
\end{enumerate}

The solutions to problem \eqref{P1} are understood in the following
sense consistent with \cite{M}.

\begin{definition}\label{D1}
A function $u \in W_0^{1, p}(\Omega)$ is said to be a generalized
solution to problem \eqref{P1} if there exists a sequence
$\left\{u_n\right\}_{n \in \mathbb{N}}$ in $W_0^{1, p}(\Omega)$ such
that
\begin{enumerate}
\item[$(a)$] $u_n \rightharpoonup u$ in $W_0^{1, p}(\Omega)$ as $n \rightarrow \infty$;
\item[$(b)$] $-\Delta_p u_n+\Delta_q u_n-f\left(\cdot, T(u_n)(\cdot),
\nabla T(u_n)(\cdot)\right) \rightharpoonup 0$ in $W^{-1,
p^{\prime}}(\Omega)$ as $n \rightarrow \infty$;
\item[$(c)$] $\displaystyle\lim _{n \rightarrow \infty}\left[\left\langle
-\Delta_p u_n + \Delta_q u_n, u_n-u\right\rangle-\int_{\Omega}
f\left(x, T(u_n), \nabla T(u_n)\right)\left(u_n-u\right) \dd
x\right]=0$.
\end{enumerate}
If in place of (c) it holds
\[
\displaystyle\lim _{n \rightarrow \infty}\left\langle-\Delta_p u_n +
\Delta_q u_n, u_n-u\right\rangle=0,
\]
we say that $u$ is a strong generalized solution of problem
\eqref{P1}.
\end{definition}

Hypothesis $(H1)$ ensures that Definition \ref{D1} is correctly
formulated, in particular the existence of the integral in (c) is
assured. This can be seen from the lemma below.

\begin{lemma}\label{L1}
Under assumption $(H1)$ one has the estimate
\[
\left\vert \displaystyle\int_{\Omega} f(x,T(u), \nabla T(u)) v \dd x
\right\vert \leq \Vert \sigma_1\Vert_{r^{\prime}}\Vert v
\Vert_r+a_1\Vert T(u)\Vert_{p^*}^\alpha \Vert v
\Vert_{\frac{p^*}{p^*-\alpha}}+a_2\Vert \nabla T(u)\Vert^\beta \Vert
v \Vert_{\frac{p}{p-\beta}} .
\]
for all $u,v \in W_{0}^{1,p}(\Omega)$.
\end{lemma}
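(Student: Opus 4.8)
The plan is to reduce the estimate to three applications of Hölder's inequality after invoking the pointwise growth bound $(H1)$. First I would observe that, since $f$ is a Carathéodory function and $T(u)\in W^{1,p}(\Omega)$, the composition $x\mapsto f(x,T(u)(x),\nabla T(u)(x))$ is measurable on $\Omega$; bounding its absolute value by $(H1)$ gives, for a.e. $x\in\Omega$,
\[
|f(x,T(u),\nabla T(u))|\le \sigma_1(x)+a_1|T(u)|^\alpha+a_2|\nabla T(u)|^\beta .
\]
Multiplying by $|v|$ and integrating, the triangle inequality splits the modulus of the integral into the three pieces $\int_{\Omega}\sigma_1|v|\,\dd x$, $a_1\int_{\Omega}|T(u)|^\alpha|v|\,\dd x$ and $a_2\int_{\Omega}|\nabla T(u)|^\beta|v|\,\dd x$.

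Next I would treat each integral separately with Hölder's inequality, using the conjugate pairs dictated by the exponents in the statement. The first term is controlled by $\|\sigma_1\|_{r'}\|v\|_r$ via the pair $(r',r)$. For the second term I would use the pair $\bigl(\tfrac{p^*}{\alpha},\tfrac{p^*}{p^*-\alpha}\bigr)$, whose exponents are conjugate since $\alpha>0$, obtaining $\int_{\Omega}|T(u)|^\alpha|v|\,\dd x\le\|T(u)\|_{p^*}^\alpha\|v\|_{p^*/(p^*-\alpha)}$ (here $\||T(u)|^\alpha\|_{p^*/\alpha}=\|T(u)\|_{p^*}^\alpha$). For the third term the pair $\bigl(\tfrac{p}{\beta},\tfrac{p}{p-\beta}\bigr)$ gives $\int_{\Omega}|\nabla T(u)|^\beta|v|\,\dd x\le\|\nabla T(u)\|_p^\beta\|v\|_{p/(p-\beta)}$ in the same way. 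Summing the three bounds yields exactly the claimed inequality.

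The only point requiring care — and where the constraints on $\alpha$ and $\beta$ enter — is verifying that every norm on the right-hand side is finite for $u,v\in W_0^{1,p}(\Omega)$. The factors $\|T(u)\|_{p^*}$ and $\|\nabla T(u)\|_p$ are finite because $T(u)\in W^{1,p}(\Omega)\hookrightarrow L^{p^*}(\Omega)$. For the test-function norms I would check that none of the exponents exceeds $p^*$: the condition $\alpha<p^*-1$ gives $\tfrac{p^*}{p^*-\alpha}<p^*$, while $\beta<\tfrac{p}{(p^*)'}$ is precisely equivalent to $\tfrac{p}{p-\beta}<p^*$ (recalling $(p^*)'=p^*/(p^*-1)$), and $r<p^*$ holds by assumption; the compact embedding $W_0^{1,p}(\Omega)\hookrightarrow L^s(\Omega)$ for $s\le p^*$ then makes each factor finite. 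I do not expect a genuine obstacle here: the argument is careful bookkeeping of conjugate exponents, the one subtlety being that the admissible ranges of $\alpha$ and $\beta$ are exactly what keep the Lebesgue exponents acting on $v$ within the Sobolev embedding range.
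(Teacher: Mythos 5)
Your proof is correct and follows essentially the same route as the paper's: apply the growth bound $(H1)$ pointwise, split the integral into three pieces, and estimate each by H\"older's inequality with the conjugate pairs $(r',r)$, $\bigl(\tfrac{p^*}{\alpha},\tfrac{p^*}{p^*-\alpha}\bigr)$, and $\bigl(\tfrac{p}{\beta},\tfrac{p}{p-\beta}\bigr)$. Your added verification that the ranges of $\alpha$, $\beta$, and $r$ keep all exponents within the Sobolev embedding range (hence all norms finite) is a useful bit of bookkeeping the paper leaves implicit, but it is not a different argument.
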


\begin{proof}
Assumption (H1) and H\"older's inequality entail
\begin{align*}
\left|\int_{\Omega} f(x, T(u), \nabla T(u)) v \dd x\right| & \leq
\int_{\Omega}|\sigma \| v| \dd x
+ a_1 \int_{\Omega}|T(u)|^{\alpha}|v| \dd x
+ a_2 \int_{\Omega}|\nabla T(u)|^{\beta}|v| \dd x \\
&
\leq\|\sigma\|_{r'}\|v\|_{r}
+a_1\|T(u)\|_{p^*}^{\alpha}\|v\|_{\frac{p^*}{p^*-\alpha}}+a_2\|\nabla
T(u)\|_{p}^{\beta}\|v\|_{\frac{p}{p-\beta}}, \quad \forall u, v \in
W_0^{1, p}(\Omega),
\end{align*}
which is the desired conclusion.
\end{proof}
\qed

Our main result provides the existence of a strong generalized
solution to problem \eqref{P1}.

\begin{theorem}\label{T1}
Assume that conditions $(H1)-(H2)$ hold. Then there exists a strong
generalized solution $u \in W_0^{1,p}(\Omega)$ to problem
\eqref{P1}.
\end{theorem}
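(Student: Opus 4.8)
The plan is to construct the approximating sequence required by Definition~\ref{D1} through a Galerkin scheme, a method that fits the notion of generalized solution particularly well: the competing operator $-\Delta_p+\Delta_q$ is a difference of two monotone operators and is therefore not monotone, which is exactly what obstructs passing to a genuine weak solution and motivates the weaker concept. First I would fix a Galerkin basis $\{e_1,e_2,\dots\}$ of $W_0^{1,p}(\Omega)$, set $V_n=\mathrm{span}\{e_1,\dots,e_n\}$, and seek $u_n\in V_n$ solving the finite-dimensional problem
\[
\langle -\Delta_p u_n+\Delta_q u_n,v\rangle=\int_\Omega f(x,T(u_n),\nabla T(u_n))\,v\,\dd x \qquad \forall\,v\in V_n .
\]
Existence on $V_n$ follows from the continuity of the associated vector field $G_n:V_n\to V_n$ (which rests on the continuity of $T$, the Carath\'eodory property of $f$, and the growth bound $(H1)$) together with the standard corollary of Brouwer's fixed point theorem: it suffices to produce $R>0$ with $\langle G_n(u),u\rangle\ge 0$ whenever $\|u\|=R$.

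Second, I would establish that coercivity radius. Testing with $u$ gives $\langle G_n(u),u\rangle=\|\nabla u\|_p^p-\|\nabla u\|_q^q-\int_\Omega f(x,T(u),\nabla T(u))\,u\,\dd x$. Bounding the last integral by Lemma~\ref{L1}, inserting the Sobolev inequality \eqref{eq1} together with the control of $T$ from $(H2)$, and using that $\|\nabla u\|_q^q$ is of lower order in $\|\nabla u\|_p$ because $q<p$, I obtain
\[
\langle G_n(u),u\rangle \ge \big(1-a_1K_1S_{\frac{p^*}{p^*-\alpha}}-a_2K_2S_{\frac{p}{p-\beta}}\big)\|\nabla u\|_p^{\,p}-c_1\|\nabla u\|_p^{\,q}-c_2\|\nabla u\|_p ,
\]
where the leading coefficient is positive \emph{precisely} by the smallness hypothesis in $(H2)$. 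Consequently $\langle G_n(u),u\rangle>0$ on $\|u\|=R$ for $R$ large but independent of $n$, which yields solutions $u_n$ enjoying a uniform bound $\|u_n\|\le R$. By reflexivity of $W_0^{1,p}(\Omega)$ I pass to a subsequence with $u_n\rightharpoonup u$, so property (a) holds.

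Third, I would verify (b). The same estimates show that $w_n:=-\Delta_p u_n+\Delta_q u_n-f(\cdot,T(u_n),\nabla T(u_n))$ is bounded in $W^{-1,p'}(\Omega)$, while by construction $\langle w_n,v\rangle=0$ for every $v\in V_m$ as soon as $n\ge m$. Given $\phi\in W_0^{1,p}(\Omega)$ and $\varepsilon>0$, choosing $v\in\bigcup_m V_m$ with $\|\phi-v\|<\varepsilon$ and exploiting $\langle w_n,v\rangle=0$ for large $n$ gives $|\langle w_n,\phi\rangle|=|\langle w_n,\phi-v\rangle|\le \big(\sup_n\|w_n\|_{W^{-1,p'}}\big)\varepsilon$, so $w_n\rightharpoonup 0$ by density of $\bigcup_m V_m$.

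Fourth and finally I would prove (c), the step I expect to be the crux. Writing $\langle -\Delta_p u_n+\Delta_q u_n,u_n-u\rangle=\langle -\Delta_p u_n+\Delta_q u_n,u_n-P_nu\rangle+\langle -\Delta_p u_n+\Delta_q u_n,P_nu-u\rangle$ with a Galerkin projection $P_nu\in V_n$ satisfying $P_nu\to u$ strongly, the second term vanishes since the operator is bounded and $P_nu-u\to 0$ in norm. In the first term $u_n-P_nu\in V_n$, so it equals $\int_\Omega f(x,T(u_n),\nabla T(u_n))(u_n-P_nu)\,\dd x$, which I estimate via Lemma~\ref{L1}. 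The factors $\|T(u_n)\|_{p^*}^\alpha$ and $\|\nabla T(u_n)\|_p^\beta$ remain bounded by $(H2)$, while $u_n-P_nu\rightharpoonup 0$ in $W_0^{1,p}(\Omega)$ converges strongly to $0$ in every $L^s(\Omega)$ with $s<p^*$ by Rellich--Kondrachov. The decisive point is that the ranges $\alpha\in(0,p^*-1)$ and $\beta\in(0,\tfrac{p}{(p^*)'})$ in $(H1)$ are exactly equivalent to $\tfrac{p^*}{p^*-\alpha}<p^*$ and $\tfrac{p}{p-\beta}<p^*$, so all three test-norms of $u_n-P_nu$ occurring in Lemma~\ref{L1} tend to $0$. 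Hence the first term vanishes as well, (c) holds, and $u$ is a strong generalized solution.
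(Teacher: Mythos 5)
Your proposal is correct, and its skeleton is the paper's: Galerkin subspaces, the Brouwer-type lemma applied on each subspace with the coercivity radius supplied by the smallness condition in $(H2)$, a uniform bound on the approximants, extraction of a weak limit, and a density argument for property (b) (this is exactly Lemma \ref{L2}, Proposition \ref{Pr1}, Corollary \ref{C2} and Section \ref{S3}). The genuine difference is in the verification of the strong condition (c). The paper introduces no projections: it tests the Galerkin identity \eqref{eq3} with $v=u_n$ to get $\langle -\Delta_p u_n+\Delta_q u_n-\mathcal{N}_f\circ T(u_n),u_n\rangle=0$, pairs the already-proved property (b) with the fixed element $u$ to get $\langle -\Delta_p u_n+\Delta_q u_n-\mathcal{N}_f\circ T(u_n),u\rangle\to 0$, subtracts to obtain \eqref{4}, and then kills the convection pairing $\int_\Omega f(x,T(u_n),\nabla T(u_n))(u_n-u)\,\dd x$ by the compact embeddings. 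You instead split $u_n-u=(u_n-P_nu)+(P_nu-u)$ with strong approximants $P_nu\in V_n$ of $u$, use the Galerkin identity on $u_n-P_nu\in V_n$, and use boundedness of $-\Delta_p+\Delta_q$ against $P_nu-u\to 0$. Both arguments hinge on the same compactness facts, namely that $r$, $\frac{p^*}{p^*-\alpha}$ and $\frac{p}{p-\beta}$ are all subcritical, which you correctly identify as the role of the ranges of $\alpha$ and $\beta$ in $(H1)$. Your route buys independence from property (b) in the proof of (c), but it costs the construction of $P_nu$: there is no canonical linear projection in $W_0^{1,p}(\Omega)$, so you should state explicitly that $P_nu$ is merely a sequence chosen from the nested dense union $\bigcup_m V_m$ with $P_nu\to u$ in norm (a diagonal selection), which is all your argument needs. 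Conversely, your direct density estimate for (b), bounding $|\langle w_n,\phi\rangle|=|\langle w_n,\phi-v\rangle|$, is a small simplification over the paper's extraction of a weakly convergent subsequence with limit $\eta$ followed by the proof that $\eta=0$.
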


The arguments developed in Sections \ref{S2} and \ref{S3} to prove
Theorem \ref{T1} permit to establish a slightly more general version
of Theorem \ref{T1}. Since the reasoning is basically the same, we
omit the proof.

\begin{corollary}\label{C1}
Under hypotheses $(H1)-(H2)$, the Dirichlet problem
\begin{equation}\label{P2}
\begin{cases}-{\rm div}|\nabla(w+u_0)|^{p-2}\nabla (w+u_0)
+ {\rm div}|\nabla(w+u_0)|^{q-2}\nabla (w+u_0)=f(x, T(w), \nabla T(w)) & \text { in } \Omega \\
w=0 & \text { on } \partial \Omega\end{cases},
\end{equation}
for some $u_0\in  W^{1,p}(\Omega)$, admits a strong generalized
solution $w \in W_0^{1,p}(\Omega)$ in the following sense: there
exists a sequence $\left\{w_n\right\}_{n \in \mathbb{N}}$ in
$W_0^{1, p}(\Omega)$ such that
\begin{enumerate}
\item[$(\tilde{a})$] $w_n \rightharpoonup w$ in $W_0^{1, p}(\Omega)$ as $n \rightarrow \infty$;
\item[$(\tilde{b})$] $-{\rm div}|\nabla(w_n+u_0)|^{p-2}\nabla(w_n+u_0)
+ {\rm div}|\nabla(w_n+u_0)|^{q-2}\nabla (w_n+u_0)-f\left(\cdot,
T(w_n)(\cdot), \nabla T(w_n)(\cdot)\right) \rightharpoonup 0$ in
$W^{-1, p^{\prime}}(\Omega)$ as $n \rightarrow \infty$;
\item[$(\tilde{c})$]
$\langle -{\rm div}(|\nabla(w_n+u_0)|^{p-2}\nabla (w_n+u_0) +
|\nabla(w_n+u_0)|^{q-2}\nabla (w_n+u_0)), w_n-w\rangle \rightarrow
0$ as $n \rightarrow \infty$.
\end{enumerate}
If $u_0=0$, Theorem \ref{T1} is recovered.
\end{corollary}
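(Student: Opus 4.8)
The plan is to reproduce the finite-dimensional approximation scheme of Theorem~\ref{T1}, but with the principal part evaluated at the shifted argument $w+u_0$. Fix a linearly independent total sequence $\{e_1,e_2,\dots\}$ in $W_0^{1,p}(\Omega)$, put $V_n=\mathrm{span}\{e_1,\dots,e_n\}$, and for $w\in W_0^{1,p}(\Omega)$ introduce the operators $\mathcal{A}_p,\mathcal{A}_q:W_0^{1,p}(\Omega)\to W^{-1,p'}(\Omega)$ and the Nemytskii term $N_f$ by
\[
\langle\mathcal{A}_p(w),v\rangle=\int_\Omega|\nabla(w+u_0)|^{p-2}\nabla(w+u_0)\cdot\nabla v\,\dd x,\qquad \langle N_f(w),v\rangle=\int_\Omega f(x,T(w),\nabla T(w))\,v\,\dd x,
\]
the operator $\mathcal{A}_q$ being obtained from $\mathcal{A}_p$ by replacing the exponent $p-2$ with $q-2$. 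Since $u_0\in W^{1,p}(\Omega)$ and $p>q$, both $\mathcal{A}_p$ and $\mathcal{A}_q$ are well defined and continuous, while Lemma~\ref{L1} (with $u$ replaced by $w$) controls $N_f$; hence $\mathcal{A}:=\mathcal{A}_p-\mathcal{A}_q-N_f$ maps continuously into $W^{-1,p'}(\Omega)$, and problem~\eqref{P2} is exactly $\mathcal{A}(w)=0$. On each $V_n$ I would solve $\langle\mathcal{A}(w_n),v\rangle=0$ for all $v\in V_n$ by the usual consequence of Brouwer's theorem.

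The decisive step is a uniform a priori bound, obtained by testing with $w$ itself. Writing $\nabla w=\nabla(w+u_0)-\nabla u_0$ and using H\"older's inequality gives
\[
\langle\mathcal{A}_p(w),w\rangle\ge\|\nabla(w+u_0)\|_p^p-\|\nabla(w+u_0)\|_p^{p-1}\|\nabla u_0\|_p,
\]
and since $\big|\,\|\nabla(w+u_0)\|_p-\|w\|\,\big|\le\|\nabla u_0\|_p$, the right-hand side is $\|w\|^p+O(\|w\|^{p-1})$. The competing term obeys $|\langle\mathcal{A}_q(w),w\rangle|\le C(\|w\|^q+1)$, which is of lower order because $q<p$. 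For the convection term I would apply Lemma~\ref{L1} with $v=w$, then $(H2)$ and \eqref{eq1}, exactly as for Theorem~\ref{T1}, to obtain $|\langle N_f(w),w\rangle|\le\theta\|w\|^p+C(1+\|w\|)$ with $\theta:=a_1K_1S_{\frac{p^*}{p^*-\alpha}}+a_2K_2S_{\frac{p}{p-\beta}}<1$. Combining the three estimates yields
\[
\langle\mathcal{A}(w),w\rangle\ge(1-\theta)\|w\|^p-C\big(\|w\|^{p-1}+\|w\|^q+1\big),
\]
which is strictly positive on the sphere $\{\|w\|=\rho\}$ once $\rho$ is large. The Brouwer argument then produces $w_n\in V_n$ with $\langle\mathcal{A}(w_n),v\rangle=0$ for $v\in V_n$ and the uniform bound $\|w_n\|\le\rho$.

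With this bound I would pass to the limit exactly as for Theorem~\ref{T1}. Up to a subsequence, $w_n\rightharpoonup w$ in $W_0^{1,p}(\Omega)$, which is $(\tilde{a})$, and $w_n\to w$ in $L^s(\Omega)$ for every $s<p^*$ by Rellich--Kondrachov. Boundedness of $\{w_n\}$ forces $\{\mathcal{A}(w_n)\}$ to be bounded in $W^{-1,p'}(\Omega)$, so a subsequence satisfies $\mathcal{A}(w_n)\rightharpoonup\chi$; testing against any $v\in\bigcup_m V_m$ gives $\langle\chi,v\rangle=0$, and density forces $\chi=0$, i.e.\ $(\tilde{b})$. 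Finally, the Galerkin identity $\langle\mathcal{A}(w_n),w_n\rangle=0$ together with $\langle\mathcal{A}(w_n),w\rangle\to0$ (from $(\tilde{b})$) gives $\langle\mathcal{A}(w_n),w_n-w\rangle\to0$; since $\frac{p^*}{p^*-\alpha}<p^*$ (because $\alpha<p^*-1$) and $\frac{p}{p-\beta}<p^*$ (because $\beta<\frac{p}{(p^*)'}$), Lemma~\ref{L1} combined with the strong $L^s$-convergence yields $\int_\Omega f(x,T(w_n),\nabla T(w_n))(w_n-w)\,\dd x\to0$, whence $\langle\mathcal{A}_p(w_n)-\mathcal{A}_q(w_n),w_n-w\rangle\to0$, which is $(\tilde{c})$.

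I expect the a priori estimate to be the main obstacle: one must verify that the cross terms created by the shift $u_0$, together with the competing $q$-contribution, remain genuinely of lower order than the dominant $\|w\|^p$, while the smallness condition in $(H2)$ keeps the leading coefficient $1-\theta$ strictly positive. As in Theorem~\ref{T1}, the ``wrong'' sign of $+\Delta_q$ prevents the competing operator from being pseudomonotone, so one cannot upgrade $w$ to a genuine weak solution; this is precisely why the generalized-solution framework and the Galerkin orthogonality used for $(\tilde{c})$ are indispensable. Taking $u_0=0$ recovers Theorem~\ref{T1}.
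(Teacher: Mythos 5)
Your proposal is correct and follows exactly the route the paper intends: the authors omit the proof of Corollary \ref{C1} precisely because it repeats the Galerkin scheme of Proposition \ref{Pr1}, Corollary \ref{C2}, and Section \ref{S3} with the principal part evaluated at the shifted argument $w+u_0$, which is what you do. The only genuinely new content, namely the a priori estimate absorbing the cross-terms generated by $u_0$ (the bound $\langle\mathcal{A}_p(w),w\rangle\ge\|\nabla(w+u_0)\|_p^p-\|\nabla(w+u_0)\|_p^{p-1}\|\nabla u_0\|_p$ and the observation that the shift perturbs $\|w\|^p$ only at order $\|w\|^{p-1}$), is handled correctly, so the smallness condition of $(H2)$ still yields the coercivity needed for Lemma \ref{L2}.
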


The proof of Theorem \ref{T1} is given in Section \ref{S3}.
Applications of Theorem \ref{T1} are presented in Section \ref{S4}.

The present paper aims to address two major difficulties that until
now been regarded regarded separately. The first one concerns the
competing operator $-\Delta_p + \Delta_q$ which destroys the
ellipticity of problem \eqref{P1} and has a totally different
behavior than the negative $(p,q)$-Laplacian $-\Delta_p - \Delta_q$.
The second one refers to the convection term
$f(\cdot,u(\cdot),\nabla u(\cdot))$ (i.e., it depends on the
solution $u$ and its gradient $\nabla u$) composed with an
additional map $T: W_0^{1, p}(\Omega) \rightarrow W^{1, p}(\Omega)$
called intrinsic operator, which can cover a multitude of local and
nonlocally relevant phenomena. It is for the first time when
problems driven by the competing operator $-\Delta_p + \Delta_q$ and
incorporating an intrinsic operator $T: W_0^{1, p}(\Omega)
\rightarrow W^{1, p}(\Omega)$ are studied.

We briefly outline related literature. The study of problems
involving the competing operator $-\Delta_p + \Delta_q$ started with
\cite{LLMZ} in a variational setting. Non-variational problems
exhibiting competing operators and convection terms were
investigated in \cite{M}. The problems with intrinsic operator have
been introduced in \cite{MM} where the driving operator is the
ordinary $(p,q)$-Laplacian. Equations with convolution in the
convection term were first considered in \cite{MotreanuMotreanu2}
and subsequently studied in \cite{MotreanuMotreanu} and \cite{MVV}.
In this paper, all the described elements are involved in the same
problem statement.

Our main result, stated as Theorem \ref{T1}, provides the existence
of solutions to problem \eqref{P1} in the sense of Definition
\ref{D1}. The proof relies on an approximation approach based on
resolving finite-dimensional equations via a consequence of
Brouwer's fixed point theorem. An essential feature of our arguments
is to appropriately match the growth of the intrinsic operator and
the growth of the convection in the a priori estimates. The abstract
result is applied to Dirichlet problems with nonhomogeneous boundary
condition, and to equations with convolution incorporated in the
convection term.

The rest of the paper is organized as follows: Section \ref{S2} is
devoted to finding finite dimensional approximations. Section
\ref{S2} contains the proof of our main result. Section \ref{S3}
sets forth two applications of the main result.

\section{Approximate solutions on finite dimensional
subspaces}\label{S2}

A consequence of Brouwer's fixed point theorem will be an essential
tool in the proof of Theorem \ref{T1}. The proof of this consequence
is usually done on $\mathbb{R}^n$ endowed with the standard
Euclidean norm (see, e.g., \cite[p. 37]{Ralph}). For the sake of
precision, we prove it on an arbitrary finite dimensional space $V$
as will be needed in the sequel without claiming that it directly
follows from the identification of $V$ with $\mathbb{R}^n$.

\begin{lemma}\label{L2}
Let $V$ be a finite dimensional space endowed with the norm
$\|\cdot\|_V$ and let $F: V \rightarrow V^{\star}$ be a continuous
map. Assume that there is a constant $R>0$ such that
\begin{eqnarray}\label{1}
\langle F(v), v\rangle \geq 0 \text { for all } v \in
V \text { with }\|v\|_V=R.
\end{eqnarray}
Then there exists $u \in V$ with $\|u\|_V \leq R$ satisfying
$F(u)=0$.
\end{lemma}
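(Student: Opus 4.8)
The plan is to reduce the statement to Brouwer's fixed point theorem on the closed ball $\overline{B}_R = \{v \in V : \|v\|_V \le R\}$, which is a compact convex set homeomorphic to the standard Euclidean ball. The difficulty flagged in the preceding remark is that $F$ takes values in $V^{\star}$, not in $V$, so there is no self-map to which Brouwer applies directly, and moreover the norm $\|\cdot\|_V$ need not be induced by an inner product. To bridge this I would fix an auxiliary inner product $(\cdot,\cdot)$ on $V$ (for instance the Euclidean one in any basis) and let $\Phi : V \to V^{\star}$ be the associated Riesz isomorphism, characterised by $\langle \Phi(w), v\rangle = (w,v)$ for all $v,w \in V$. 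Since $V$ is finite dimensional, $\Phi$ and $\Phi^{-1}$ are continuous. Setting $H = \Phi^{-1}\circ F : V \to V$, one obtains a continuous self-map satisfying the key identity $(H(v), v) = \langle F(v), v\rangle$ for every $v \in V$, and, because $\Phi^{-1}$ is injective, $H(u)=0$ is equivalent to $F(u)=0$. Thus it suffices to produce a zero of $H$ in $\overline{B}_R$.

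Next I would argue by contradiction, assuming $H(v) \ne 0$ for every $v \in \overline{B}_R$ (equivalently, $F$ never vanishes there). Then the map
$$
G(v) = -R\,\frac{H(v)}{\|H(v)\|_V}, \qquad v \in \overline{B}_R,
$$
is well defined and continuous, and it sends $\overline{B}_R$ into the sphere $\{\,v : \|v\|_V = R\,\} \subset \overline{B}_R$. Here it is essential that the normalisation uses exactly the given norm $\|\cdot\|_V$, so that Brouwer's theorem is applied on the $\|\cdot\|_V$-ball. Brouwer's fixed point theorem then furnishes $v_0 \in \overline{B}_R$ with $G(v_0)=v_0$; taking norms gives $\|v_0\|_V = R$, so in particular $v_0 \ne 0$.

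Finally I would extract the contradiction from the fixed point relation. From $v_0 = G(v_0)$ one reads off $H(v_0) = -\lambda v_0$ with $\lambda = \|H(v_0)\|_V / R > 0$. Using the key identity together with the positive definiteness of the inner product,
$$
\langle F(v_0), v_0\rangle = (H(v_0), v_0) = -\lambda\,(v_0,v_0) < 0,
$$
since $v_0 \ne 0$. As $\|v_0\|_V = R$, this contradicts hypothesis \eqref{1}. Hence $H$, and therefore $F$, must vanish at some $u \in \overline{B}_R$, which is the assertion. The only genuinely delicate point is the one highlighted in the text: keeping the two structures separate, so that the given norm $\|\cdot\|_V$ governs the ball and the normalisation defining $G$, while the auxiliary inner product only serves to convert the $V^{\star}$-valued map into a self-map and to read the pairing $\langle F(v),v\rangle$ as an angle condition. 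Once this bookkeeping is in place, the remainder is the classical acute-angle argument.
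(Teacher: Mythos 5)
Your proof is correct and follows essentially the same strategy as the paper's: argue by contradiction, turn $F$ into a $V$-valued map, normalize its negative to get a continuous self-map of the closed $\|\cdot\|_V$-ball landing on the sphere of radius $R$, and let Brouwer's fixed point theorem produce a point where the sign condition \eqref{1} is violated. The only difference is bookkeeping: the paper transports everything to $\mathbb{R}^n$ via a basis (pairing $F$ against the basis vectors and using the transported norm $|\cdot|_1$ together with a norm-equivalence constant), whereas you stay in $V$ and use a Riesz isomorphism for an auxiliary inner product --- a slightly cleaner packaging of the same acute-angle argument.
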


\begin{proof}
Arguing by contradiction, suppose that $F(v)\not=0$  for all
$\|v\|_V \leq R$. Fix a basis $\{v_1,\dots,v_n\}$ of $V$, where $n$
is the dimension of $V$, and define the map
$G:\mathbb{R}^n\rightarrow \mathbb{R}^n$ by
$$
G(x)=(\langle F(x),v_1\rangle,\dots,\langle F(x),v_n\rangle) \quad
\text { for all } x=(x_1,\dots,x_n)\in \mathbb{R}^n.
$$
On the standard product $G(x)\cdot x$ in $\mathbb{R}^n$ we infer by
\eqref{1} that
$$
G(x)\cdot x=\left\langle
F\left(\sum_{i=1}^nx_iv_i\right),\sum_{i=1}^nx_iv_i\right\rangle\geq
0
$$
for all $x=(x_1,\dots,x_n)\in \mathbb{R}^n$ with
$|x|_1:=\|\sum_{i=1}^nx_iv_i\|_V=R$. Notice that $|\cdot|_1$ is a
norm on $\mathbb{R}^n$ and consider the corresponding closed ball
$B_R=\left\{x\in \mathbb{R}^n:|x|_1\leq R\right\}$. In view of our
assumption in arguing by contradiction, the map $g:B_R\rightarrow
B_R$ given by
$$
g(x)=-\frac{R}{|G(x)|_1}G(x) \quad \text { for all } x\in B_R
$$
is well defined and continuous. Consequently, Brouwer's fixed point
theorem ensures the existence of some $x_0\in B_R$ satisfying
$g(x_0)=x_0$. Since $|x_0|_1=|g(x_0)|_1=R$, we have $x_0\not=0$. On
the other hand, denoting by $|\cdot|_2$ the Euclidean norm on
$\mathbb{R}^n$, there exists a constant $c>0$ such that $|x|_1\leq
c|x|_2$ whenever $x\in \mathbb{R}^n$. This yields the contradiction
$$
0<R^2=|x_0|_1^2\leq c^2|x_0|_2^2=c^2x_0\cdot x_0=c^2x_0\cdot
g(x_0)=-\frac{Rc^2}{|G(x_0)|_1}G(x_0)\cdot x_0\leq 0,
$$
which proves the result.
\end{proof}
\qed

The construction of a sequence as required in Definition \ref{D1}
relies on the usage of a Galerkin basis of the Sobolev space
$W_0^{1, p}(\Omega)$. We recall that a Galerkin basis of $W_0^{1,
p}(\Omega)$ means a sequence $\left\{X_n\right\}_{n \in \mathbb{N}}$
of vector subspaces of $W_0^{1, p}(\Omega)$ satisfying
\begin{enumerate}
\item[$(i)$] $\operatorname{dim}\left(X_n\right)<\infty, \quad \forall n \in \mathbb{N}$;
\item[$(ii)$] $X_n \subset X_{n+1}, \quad \forall n \in \mathbb{N}$;
\item[$(iii)$] $\displaystyle \overline{\bigcup_{n=1}^{\infty} X_n}=W_0^{1, p}(\Omega)$.
\end{enumerate}
The existence of a Galerkin basis is guaranteed because the Banach
space $W_0^{1, p}(\Omega)$ with $1<p<+\infty$ is separable. Fix a
Galerkin basis $\left\{X_n\right\}_{n \in \mathbb{N}}$ of $W_0^{1,
p}(\Omega)$. Lemma \ref{L2} will be applied on each finite
dimensional subspace $X_n$.

\begin{proposition}\label{Pr1}
Assume that conditions $(H1)-(H2)$ are fulfilled. Then for each $n
\in \mathbb{N}$ there exists $u_n \in X_n$ such that
\begin{equation}\label{eq3}
\left\langle-\Delta_p u_n+\Delta_q u_n, v\right\rangle=\int_{\Omega}
f\left(x, T(u_n), \nabla T(u_n)\right) v \mathrm{d} x \quad \text {
for all } v \in X_n .
\end{equation}
\end{proposition}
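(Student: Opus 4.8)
The plan is to apply Lemma \ref{L2} on the finite dimensional space $V=X_n$, equipped with the norm of $W_0^{1,p}(\Omega)$. To this end I would introduce the map $F_n:X_n\rightarrow X_n^{\star}$ defined by
\[
\langle F_n(u),v\rangle=\langle-\Delta_p u+\Delta_q u,v\rangle-\int_{\Omega}f(x,T(u),\nabla T(u))\,v\,\dd x,\qquad u,v\in X_n,
\]
so that a zero $u_n$ of $F_n$ is exactly a solution of \eqref{eq3}. It then remains to verify the two hypotheses of Lemma \ref{L2}: the continuity of $F_n$ and the sign condition \eqref{1} for a suitable radius $R$.

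For the continuity I would treat the principal part and the convection part separately. The map $u\mapsto-\Delta_p u+\Delta_q u$ is continuous from $W_0^{1,p}(\Omega)$ into $W^{-1,p'}(\Omega)$, since the convergence $\nabla u_k\to\nabla u$ in $L^p(\Omega)$ propagates through the Nemytskii operators $\xi\mapsto|\xi|^{p-2}\xi$ and $\xi\mapsto|\xi|^{q-2}\xi$, and restricting to $X_n$ preserves this. For the convection part, if $u_k\to u$ in $X_n$ then continuity of $T$ gives $T(u_k)\to T(u)$ in $W^{1,p}(\Omega)$, and the Sobolev embeddings yield $T(u_k)\to T(u)$ in $L^{p^*}(\Omega)$ and $\nabla T(u_k)\to\nabla T(u)$ in $L^p(\Omega)$. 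The growth bound in $(H1)$, together with the continuity of the associated Nemytskii operator, then gives convergence of $f(\cdot,T(u_k),\nabla T(u_k))$ in an appropriate Lebesgue space; since $X_n$ is finite dimensional it suffices to test against the finitely many vectors of a basis, and each pairing converges by H\"older's inequality. Here one checks that the exponents $\frac{p^*}{p^*-\alpha}$ and $\frac{p}{p-\beta}$ occurring in Lemma \ref{L1} lie in $[1,p^*)$, which is exactly what the ranges $\alpha\in(0,p^*-1)$ and $\beta\in(0,\frac{p}{(p^*)'})$ guarantee.

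For the sign condition I would compute, for $v\in X_n$, using $\langle-\Delta_p v,v\rangle=\|v\|^p$ and $\langle\Delta_q v,v\rangle=-\|\nabla v\|_q^q$,
\[
\langle F_n(v),v\rangle=\|v\|^p-\|\nabla v\|_q^q-\int_{\Omega}f(x,T(v),\nabla T(v))\,v\,\dd x.
\]
H\"older's inequality on the bounded domain bounds $\|\nabla v\|_q^q\leq|\Omega|^{1-q/p}\|v\|^q$, while Lemma \ref{L1} (applied with $u=v$), hypothesis $(H2)$, and the embedding constants from \eqref{eq1} bound the integral by $\mu\|v\|^p+C_1\|v\|$, where $\mu=a_1K_1S_{\frac{p^*}{p^*-\alpha}}+a_2K_2S_{\frac{p}{p-\beta}}$ and $C_1$ collects the lower order contributions stemming from $\sigma_1$ and $K_3$. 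This produces
\[
\langle F_n(v),v\rangle\geq(1-\mu)\|v\|^p-|\Omega|^{1-q/p}\|v\|^q-C_1\|v\|.
\]

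The decisive point — the one place where the structure of the problem is genuinely used — is that $(H2)$ forces $\mu<1$, so the leading coefficient $1-\mu$ is strictly positive. Since $p>q>1$, the term $(1-\mu)\|v\|^p$ dominates the remaining two as $\|v\|\to\infty$, and I can fix $R>0$ large enough that the right-hand side is nonnegative whenever $\|v\|=R$. This is precisely \eqref{1}, so Lemma \ref{L2} delivers $u_n\in X_n$ with $\|u_n\|\leq R$ and $F_n(u_n)=0$, establishing \eqref{eq3}. I expect the main obstacle to be the continuity of the convection part, where the continuity of $T$, the Sobolev embeddings, and the continuity of the Nemytskii operator under the $(H1)$ growth must be chained together while keeping the exponents consistent; by contrast, the coercivity is a direct computation once the matching of growths provided by $(H2)$ is exploited.
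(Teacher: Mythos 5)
Your proposal is correct and follows essentially the same route as the paper: define the operator $A_n$ (your $F_n$) on $X_n$, derive the coercivity estimate $\langle A_n(v),v\rangle\geq(1-\mu)\|v\|^p-|\Omega|^{\frac{p-q}{p}}\|v\|^q-C_1\|v\|$ from Lemma \ref{L1}, $(H2)$ and \eqref{eq1}, and apply Lemma \ref{L2} with $R$ large. In fact you are slightly more careful than the paper, which takes the continuity of $A_n$ for granted when invoking Lemma \ref{L2}, whereas you spell out how it follows from the continuity of $T$, the Sobolev embeddings, and the Nemytskii operator under the $(H1)$ growth.
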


\begin{proof}
For each $n \geq 1$ we introduce the map $A_n: X_n \rightarrow
X_n^*$ by
$$
\left\langle A_n(u), v\right\rangle=\left\langle-\Delta_p u+\Delta_q
u, v\right\rangle-\int_{\Omega} f(x, T(u), \nabla T(u)) v \dd x
\text { for all } u, v \in X_n .
$$
On the basis of Lemma \ref{L1}, assumption $(H2)$, \eqref{eq1}, and
H\"older's inequality, we find that
$$
\begin{aligned}
\left\langle A_n(v), v\right\rangle &
=\int_{\Omega}\bigg(|\nabla v|^p-|\nabla v|^q-f(x, T(v), \nabla T(v)) v\bigg) \dd x \\
& \geq \Vert \nabla v \Vert_{p}^{p} - \Vert \nabla v \Vert_{q}^{q} -
\|\sigma\|_{r'}\|v\|_{r} -
a_1\|T(v)\|_{p^*}^{\alpha}\|v\|_{\frac{p^*}{p^*-\alpha}}
- a_2\|\nabla T(v)\|_{p}^{\beta}\|v\|_{\frac{p}{p-\beta}}\\
& \geq \Vert \nabla v \Vert_{p}^{p} - \|\nabla v\|_{q}^q
- S_r\|\sigma\|_{r'}\|\nabla v\|_{p} - a_1(K_1\|v\|^{p-1}+K_3)\|v\|_{\frac{p^*}{p^*-\alpha}}
- a_2(K_2\|v\|^{p-1}+K_3)\|v\|_{\frac{p}{p-\beta}}\\
& \geq\left(1- a_1 K_1 S_{\frac{p^*}{p^*-\alpha}}-a_2 K_2
S_{\frac{p}{p-\beta}}\right)\|\nabla
v\|_{p}^p-|\Omega|^{\frac{p-q}{p}}\|\nabla v\|_{p}^q-C_1\|\nabla
v\|_{p}, \quad \forall v \in X_n,
\end{aligned}
$$
where $C_1>0$ is a constant and $|\Omega|$ denotes the Lebesgue
measure of $\Omega$. Using that $p>q>1$ and $1- a_1 K_1
S_{\frac{p^*}{p^*-\alpha}}-a_2 K_2 S_{\frac{p}{p-\beta}}>0$ as known
from $(H2)$, we have
\[
\langle A_n(v),v \rangle \geq 0, \ \ \text{for all} \ \ v \in X_n \
\ \text{with} \ \ \Vert v \Vert = R
\]
provided $R>0$ is sufficiently large. Lemma \ref{L2} can thus be
applied with $V=X_n$ and $F = A_n$. Consequently, there exists $u_n
\in X_n$ solving the equation $A_n (u_n) = 0$, so \eqref{eq3} holds
true. The proof is complete.
\end{proof}
\qed

The element $u_n \in X_n$ appearing in Proposition \ref{Pr1} can be
interpreted as a finite dimensional approximate solution to problem
\eqref{P1}. The next corollary establishes an important property of
the sequence $\left\{u_n\right\}_{n \in \mathbb{N}}$.
\begin{corollary}\label{C2}
Assume that conditions $(H1)-(H2)$ are fulfilled. Then the sequence
$\left\{u_n\right\}_{n \in \mathbb{N}}$ constructed in Proposition
\ref{Pr1} is bounded in $W_0^{1, p}(\Omega)$.
\end{corollary}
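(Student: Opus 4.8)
The plan is to show boundedness of $\{u_n\}$ by exploiting the coercivity estimate already established inside the proof of Proposition~\ref{Pr1}. The key observation is that the estimate derived there for $\langle A_n(v),v\rangle$ holds for every $v\in X_n$, not merely on a sphere of radius $R$; and since $u_n$ solves $A_n(u_n)=0$, we may substitute $v=u_n$ and obtain $\langle A_n(u_n),u_n\rangle=0$.

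First I would recall the chain of inequalities from Proposition~\ref{Pr1}, which gives, for every $v\in X_n$,
\begin{equation*}
\langle A_n(v),v\rangle \geq \Bigl(1- a_1 K_1 S_{\frac{p^*}{p^*-\alpha}}-a_2 K_2 S_{\frac{p}{p-\beta}}\Bigr)\|\nabla v\|_p^p-|\Omega|^{\frac{p-q}{p}}\|\nabla v\|_p^q-C_1\|\nabla v\|_p.
\end{equation*}
Setting $v=u_n$ and using $\langle A_n(u_n),u_n\rangle=0$, and abbreviating $c_0:=1- a_1 K_1 S_{\frac{p^*}{p^*-\alpha}}-a_2 K_2 S_{\frac{p}{p-\beta}}>0$ (positive by $(H2)$), I obtain
\begin{equation*}
c_0\|u_n\|^p \leq |\Omega|^{\frac{p-q}{p}}\|u_n\|^q+C_1\|u_n\|,
\end{equation*}
where $\|u_n\|=\|\nabla u_n\|_p$ is the norm on $W_0^{1,p}(\Omega)$.

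The final step is purely analytic and is where the only real work lies, though it is routine: I must deduce a uniform bound on $\|u_n\|$ from this inequality. Since $p>q>1$, the leading term $c_0\|u_n\|^p$ dominates both $\|u_n\|^q$ and $\|u_n\|$ as $\|u_n\|\to\infty$. Concretely, if the sequence were unbounded along a subsequence, dividing the inequality by $\|u_n\|^p$ would force $c_0\le |\Omega|^{\frac{p-q}{p}}\|u_n\|^{q-p}+C_1\|u_n\|^{1-p}\to 0$, contradicting $c_0>0$. Hence $\|u_n\|$ is bounded, and since the bound depends only on $c_0$, $|\Omega|$, $C_1$, $p$, $q$ (none of which depend on $n$), the constant is uniform in $n$, establishing that $\{u_n\}$ is bounded in $W_0^{1,p}(\Omega)$.

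I expect the main (and essentially only) obstacle to be presentational rather than substantive: one must verify that the coercivity estimate is legitimately applicable at the specific point $u_n$ — this is immediate because the estimate was proved for \emph{all} $v\in X_n$ — and that the resulting superlinear-dominates-sublinear comparison yields a bound independent of $n$. Once these points are observed, the proof reduces to the elementary fact that a nonnegative real number $t$ satisfying $c_0 t^p\le a t^q+b t$ with $p>q>1$ and $c_0,a,b>0$ lies in a bounded set determined by the constants alone.
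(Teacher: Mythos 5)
Your proof is correct and follows essentially the same route as the paper: the paper also tests the Galerkin identity \eqref{eq3} with $v=u_n$ (equivalently, uses $\langle A_n(u_n),u_n\rangle=0$) and repeats the same H\"older/embedding/$(H2)$ estimates from Proposition \ref{Pr1} to arrive at the inequality $c_0\|\nabla u_n\|_p^p\le |\Omega|^{\frac{p-q}{p}}\|\nabla u_n\|_p^q+C_1\|\nabla u_n\|_p$, concluding from $p>q>1$. The only difference is presentational: you reuse the coercivity estimate already proved for all $v\in X_n$, and you spell out the final elementary comparison that the paper leaves implicit.
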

\begin{proof}
Let us insert $v=u_n$ in \eqref{eq3}. Through H\"older's inequality,
Lemma \ref{L1}, \eqref{eq1}, and assumption $(H2)$, we derive as
before
\begin{align*}
\left\|\nabla u_n\right\|_{p}^p&=\left\|\nabla u_n\right\|_{q}^q
+\int_{\Omega} f\left(x, T(u_n), \nabla T(u_n)\right) u_n \dd x \\
&\leq|\Omega|^{\frac{p-q}{p}}\left\|\nabla u_n\right\|_{p}^q +
\|\sigma\|_{r'}\|u_n\|_{r}+a_1\|T(u_n)\|_{p^*}^{\alpha}\|u_n\|_{\frac{p^*}{p^*-\alpha}}
+a_2\|\nabla T(u_n)\|_{p}^{\beta}\|u_n\|_{\frac{p}{p-\beta}}\\
&\leq|\Omega|^{\frac{p-q}{p}}\left\|\nabla u_n\right\|_{p}^q +
S_r\|\sigma\|_{r'}\|\nabla u_n\|_{p} + \left(a_1 K_1
S_{\frac{p^*}{p^*-\alpha}}+a_2 K_2
S_{\frac{p}{p-\beta}}\right)\|\nabla u_n\|_{p}^p + C_1\|\nabla
u_n\|_{p},
\end{align*}
with a constant $C_1>0$. Since $a_1 K_1
S_{\frac{p^*}{p^*-\alpha}}+a_2 K_2 S_{\frac{p}{p-\beta}}<1$, it
results the desired conclusion.
\end{proof}
\qed

\section{Proof of Theorem \ref{T1}}\label{S3}

Corollary \ref{C2} asserts that the sequence $\left\{u_n\right\}_{n
\in \mathbb{N}}$ constructed in Proposition \ref{Pr1} is bounded in
$W_0^{1, p}(\Omega)$. Since the space $W_0^{1, p}(\Omega)$ is
reflexive, along a relabeled subsequence we have $u_n
\rightharpoonup u$ in $W_0^{1, p}(\Omega)$ with some $u \in W_0^{1,
p}(\Omega)$. Condition $(a)$ in Definition \ref{D1} is thus
verified.

Due to hypothesis $(H1)$, the Nemytskii operator $\mathcal{N}_f:
W^{1, p}(\Omega) \rightarrow$
$L^{\left(p^*\right)^{\prime}}(\Omega)$ asssociated with the
function $f$, that is
$$
\mathcal{N}_f(u)=f(x, u, \nabla u), \quad \forall u\in W^{1,
p}(\Omega),
$$
is well defined, bounded (in the sense that it maps bounded sets
into bounded sets) and continuous as shown in Lemma \ref{L1}. Hence,
by hypothesis $(H2)$ and the continuity of the embedding
$L^{\left(p^*\right)^{\prime}}(\Omega) \hookrightarrow W^{-1,
p^{\prime}}(\Omega)$, the operator $\mathcal{N}_f \circ T: W_0^{1,
p}(\Omega) \rightarrow W^{-1, p^{\prime}}(\Omega)$ is continuous and
bounded. Taking into account that $1<q<p$, one has the continuous
embedding $W_0^{1, p}(\Omega) \hookrightarrow W_0^{1, q}(\Omega)$,
so the operator $-\Delta_p + \Delta_q: W_0^{1, p}(\Omega)
\rightarrow W^{-1, p^{\prime}}(\Omega)$ is continuous and bounded.
Consequently, the operator $ -\Delta_p + \Delta_q - \mathcal{N}_f
\circ T$ is bounded, which enables us to conclude that the sequence
$\left\{-\Delta_p u_n+\Delta_q u_n-\mathcal{N}_f \circ
T\left(u_n\right)\right\}_{n \geq 1}$ is bounded in $W^{-1,
p^{\prime}}(\Omega)$. The reflexivity of $W^{-1,
p^{\prime}}(\Omega)$ allows us to pass to a relabeled subsequence of
$\left\{u_n\right\}_{n \in \mathbb{N}}$ for which
$$
-\Delta_p u_n+\Delta_q u_n-\mathcal{N}_f \circ T\left(u_n\right)
\rightharpoonup \eta \text { in } W^{-1, p^{\prime}}(\Omega)
$$
with some $\eta \in W^{-1, p^{\prime}}(\Omega)$.

Let $v \in \displaystyle\bigcup_{n\in\mathbb{N}} X_n$ and choose an
integer $m \geq 1$ such that $v \in X_m$. Proposition \ref{Pr1}
entails that equality \eqref{eq3} holds true for all $n \geq m$
because $X_k \subset X_{k+1}$ for all $k$. Letting $n \rightarrow
\infty$ in \eqref{eq3} we obtain $\langle\eta, v\rangle=0$. Then the
density of $\displaystyle\bigcup_{n \in \mathbb{N}} X_n$ in
$W_0^{1,p}(\Omega)$ implies $\eta=0$. Consequently, property $(b)$
in Definition \ref{D1} is proven.

Inserting $v=u_n$ in \eqref{eq3} gives
\begin{equation}\label{eq5}
\left\|\nabla u_n\right\|_{p}^p=\left\|\nabla u_n\right\|_{q}^q
+\int_{\Omega} f\left(x, T(u_n), \nabla T(u_n)\right) u_n \dd x,
\quad \forall n \geq 1,
\end{equation}
while by property $(b)$ in Definition \ref{D1} we get
\begin{equation}\label{eq6}
\left\langle-\Delta_p u_n+\Delta_q u_n-\mathcal{N}_f \circ
T\left(u_n\right), u\right\rangle \rightarrow 0 \text { as } n
\rightarrow \infty.
\end{equation}
From \eqref{eq5} and \eqref{eq6} it turns out
\begin{eqnarray}\label{4}
\lim _{n \rightarrow \infty}\left\langle-\Delta_p u_n +\Delta_q
u_n-\mathcal{N}_f \circ T\left(u_n\right), u_n-u\right\rangle=0.
\end{eqnarray}

Due to assumption $(H_2)$ and the boundedness of
$\left\{u_n\right\}_{n \in \mathbb{N}}$ there exists a constant
$c_1>0$ such that $\Vert T(u_n) \Vert_{p^*}\leq c_1$ and $\Vert
\nabla T(u_n) \Vert_p \leq c_1$ for all $n$. Then Lemma \ref{L1}
leads to the estimate
\begin{align*}
\left|\langle\mathcal{N}_f \circ T(u_n),u_n-u \rangle\right|
&= \left|\int_{\Omega} f\left(x, T\left(u_n\right),
\nabla T\left(u_n\right)\right)\left(u_n-u\right) d x\right|\\
&\leq \Vert \sigma_1\Vert_{r^{\prime}}\Vert u_n - u \Vert_r
+a_1\Vert T(u_n)\Vert_{p^*}^\alpha \Vert u_n-u \Vert_{\frac{p^*}{p^*-\alpha}}
+a_2\Vert \nabla T(u_n)\Vert_{p}^\beta \Vert u_n-u \Vert_{\frac{p}{p-\beta}} \\
&\leq
\left\|\sigma_1\right\|_{r^{\prime}}\left\|u_n-u\right\|_r+a_1
c_1^\alpha\left\|u_n-u\right\|_{\frac{p^*}{p^*-\alpha}}+a_2
c_1^\beta\left\|u_n-u\right\|_{\frac{p}{p-\beta}}
\end{align*}
for every $n$. In view of $(H1)$, the space $W_0^{1, p}(\Omega)$ is
compactly embedded in $L^r(\Omega)$, $L^{p^*
/\left(p^*-\alpha\right)}(\Omega)$, and $L^{p /(p-\beta)}(\Omega)$.
Letting $n \rightarrow+\infty$ we arrive at
\[
\lim _{n \rightarrow \infty}\langle\mathcal{N}_f \circ T(u_n),u_n-u \rangle
= \displaystyle\lim_{n \to \infty}\int_{\Omega} f\left(x, T(u_n), \nabla T(u_n)\right) u_n \dd x = 0.
\]
Then \eqref{4} yields
\[
\lim _{n \rightarrow \infty}\left\langle-\Delta_p u_n+\Delta_q u_n,
u_n-u\right\rangle=0.
\]
Therefore according to Definition \ref{D1}, $u$ is a strong
generalized solution of the problem \eqref{P1}, which completes the
proof of Theorem \ref{T1}. \qed

\section{Applications}\label{S4}

Our first application refers to the nonhomogenous Dirichlet problem
involving competing operators and convection
\begin{equation}\label{P3}
\begin{cases}-\Delta_p u + \Delta_q u=f(x, u, \nabla u) & \text { in } \Omega \\
u=u_0 & \text { on } \partial \Omega,\end{cases}
\end{equation}
with $u_0\in W^{1,p}(\Omega)$. The data $\Omega$, $\Delta_p$,
$\Delta_q$, and $f(x,s,\xi)$ have the same significance as in the
statement of problem \eqref{P1}. The equality $u=u_0$ on the
boundary $\partial \Omega$ is in the sense of traces meaning that
$u-u_0\in W_0^{1,p}(\Omega)$. If $u_0=0$ we retrieve problem
\eqref{P1}.

By a strong generalized solution to problem \eqref{P3} we mean any
$u\in W^{1,p}(\Omega)$ such that  $u-u_0\in W_0^{1,p}(\Omega)$ and
there exits a sequence $\left\{w_n\right\}_{n \in \mathbb{N}}$ in
$W_0^{1, p}(\Omega)$ satisfying the requirements $(\tilde{a})$ and
$(\tilde{c})$ with $w=u-u_0$ and $(\tilde{b})$ with
$T(w_n)=w_n+u_0$. If $u_0=0$, Definition \ref{D1} is recovered.

\begin{theorem}\label{T2}
Assume that condition $(H1)$ is satisfied with $\alpha=\beta=p-1$
and
$$
\max\{2^{p-2},1\}\left(a_1S_{p^*}^{p-1}S_{\frac{p^*}{p^*-p+1}}+a_2S_1\right)<
1.
$$
Then there exists a strong generalized solution $u\in
W^{1,p}(\Omega)$ to problem \eqref{P3} in the sense mentioned above.
\end{theorem}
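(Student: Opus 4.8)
The plan is to recognize problem \eqref{P3} as an instance of problem \eqref{P2} and then invoke Corollary \ref{C1}. Performing the shift $w := u - u_0$, the boundary requirement $u - u_0 \in W_0^{1,p}(\Omega)$ becomes $w \in W_0^{1,p}(\Omega)$, and the equation turns into
\begin{equation*}
-{\rm div}|\nabla(w+u_0)|^{p-2}\nabla(w+u_0) + {\rm div}|\nabla(w+u_0)|^{q-2}\nabla(w+u_0) = f(x, w+u_0, \nabla(w+u_0)),
\end{equation*}
which is exactly \eqref{P2} once we take as intrinsic operator the translation $T:W_0^{1,p}(\Omega)\to W^{1,p}(\Omega)$, $T(w)=w+u_0$. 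First I would record that this $T$ is admissible: since $u_0\in W^{1,p}(\Omega)$ is fixed and the embedding $W_0^{1,p}(\Omega)\hookrightarrow W^{1,p}(\Omega)$ is continuous, $T$ is continuous, and moreover $T(w)\in L^{p^*}(\Omega)$ and $\nabla T(w)\in L^p(\Omega)^N$, so the quantities occurring in $(H2)$ are finite.

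The heart of the matter is checking that this particular $T$, with $\alpha=\beta=p-1$, satisfies $(H1)$--$(H2)$. Hypothesis $(H1)$ is assumed; note that the choice $\alpha=\beta=p-1$ is compatible with the admissible ranges $\alpha\in(0,p^*-1)$ and $\beta\in(0,\frac{p}{(p^*)'})$ precisely because $p<p^*$. For $(H2)$ I would bound the two growth quantities. The triangle inequality together with the Sobolev embedding \eqref{eq1} gives $\|T(w)\|_{p^*}\leq S_{p^*}\|\nabla w\|_p+\|u_0\|_{p^*}$ and $\|\nabla T(w)\|_p\leq\|\nabla w\|_p+\|\nabla u_0\|_p$. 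The essential device is the elementary inequality $(a+b)^{p-1}\leq\max\{2^{p-2},1\}(a^{p-1}+b^{p-1})$ for $a,b\geq 0$, which holds because $t\mapsto t^{p-1}$ is convex when $p\geq 2$ and subadditive when $1<p<2$. Raising the two bounds to the power $p-1$ and applying it yields the two estimates of $(H2)$ with
\begin{equation*}
K_1=\max\{2^{p-2},1\}\,S_{p^*}^{p-1}, \qquad K_2=\max\{2^{p-2},1\}, \qquad K_3=\max\{2^{p-2},1\}\max\{\|u_0\|_{p^*}^{p-1},\|\nabla u_0\|_p^{p-1}\}.
\end{equation*}

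Finally I would verify the smallness requirement of $(H2)$. With $\alpha=\beta=p-1$ the two Sobolev exponents entering $(H2)$ become $\frac{p^*}{p^*-p+1}$ and $\frac{p}{p-\beta}$, whence
\begin{equation*}
a_1 K_1 S_{\frac{p^*}{p^*-p+1}}+a_2 K_2 S_{\frac{p}{p-\beta}}=\max\{2^{p-2},1\}\left(a_1 S_{p^*}^{p-1} S_{\frac{p^*}{p^*-p+1}}+a_2 S_{\frac{p}{p-\beta}}\right),
\end{equation*}
which is exactly the left-hand side of the smallness assumption imposed in the theorem and is therefore $<1$ by hypothesis. With $(H1)$--$(H2)$ secured for $T(w)=w+u_0$, Corollary \ref{C1} supplies a strong generalized solution $w\in W_0^{1,p}(\Omega)$ of \eqref{P2} together with an approximating sequence $\{w_n\}$ obeying $(\tilde a)$--$(\tilde c)$ and $T(w_n)=w_n+u_0$. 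Setting $u:=w+u_0$ then gives $u-u_0=w\in W_0^{1,p}(\Omega)$ and shows that $\{w_n\}$ is precisely the sequence required by the definition of a strong generalized solution of \eqref{P3}, which finishes the argument. I expect the one delicate point to be the bookkeeping inside $(H2)$ --- propagating the power-$(p-1)$ growth through the constant $\max\{2^{p-2},1\}$ so that the Sobolev constants combine into the stated inequality --- since everything else is a faithful transcription into Corollary \ref{C1}.
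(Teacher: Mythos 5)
Your proposal is correct and follows essentially the same route as the paper's proof: the same shift $w=u-u_0$, the same intrinsic operator $T(w)=w+u_0$, the same elementary inequality $(a+b)^{p-1}\leq\max\{2^{p-2},1\}(a^{p-1}+b^{p-1})$ yielding $K_1=\max\{2^{p-2},1\}S_{p^*}^{p-1}$ and $K_2=\max\{2^{p-2},1\}$, and the same appeal to Corollary \ref{C1}; you are if anything more explicit, since the paper records only the two growth inequalities and leaves the smallness check of $(H2)$ implicit.

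One caveat about that smallness check, which you yourself flagged as the delicate bookkeeping point. With $\beta=p-1$ one has $\frac{p}{p-\beta}=p$, so what $(H2)$ actually requires is $\max\{2^{p-2},1\}\,\bigl(a_1S_{p^*}^{p-1}S_{\frac{p^*}{p^*-p+1}}+a_2S_p\bigr)<1$, with the constant $S_p$ of the embedding into $L^p(\Omega)$. The theorem's hypothesis as printed carries $S_1$ instead, and your claim that your expression is ``exactly the left-hand side of the smallness assumption'' tacitly identifies $S_{p/(p-\beta)}$ with $S_1$; these are different constants in general (one only has $S_1\leq|\Omega|^{1/p'}S_p$, so the printed hypothesis involving $S_1$ is in general weaker than what $(H2)$ demands). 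This discrepancy is inherited from the paper itself --- the statement almost surely contains a typo ($S_1$ written for $S_{p/1}=S_p$), and the paper's own proof makes the same silent identification, since it never spells out the smallness verification --- so it is not a defect of your argument relative to the paper's; but to be airtight you should either verify $(H2)$ with $S_p$ and assume the smallness hypothesis in that form, or note explicitly that the theorem's condition should read $S_p$ in place of $S_1$.
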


\begin{proof}
We observe that $p-1<p^*-1$ and
$p-1<\frac{p}{\left(p^*\right)^{\prime}}$, thus $\alpha=\beta=p-1$
is admissible for condition  $(H1)$. The substitution $w=u-u_0$
reduces problem \eqref{P3} to finding $w\in W_0^{1,p}(\Omega)$ that
resolves problem \eqref{P2} with $T(w)=w+u_0$. We note that the map
$T:W_0^{1, p}(\Omega) \rightarrow W^{1, p}(\Omega)$ given by
$T(w)=w+u_0$ for all $w\in W_0^{1,p}(\Omega)$ is continuous and
satisfies hypothesis $(H2)$ with $\alpha=\beta=p-1$ since
$$
\|w+u_0\|_{p^*}^{p-1}\leq \max\{2^{p-2},1\}(S_{p^*}^{p-1}\|\nabla
w\|_p^{p-1}+\|u_0\|_{p^*}^{p-1})
$$
and
$$
\|\nabla(w+u_0)\|_p^{p-1}\leq \max\{2^{p-2},1\}(\|\nabla
w\|_p^{p-1}+\|\nabla u_0\|_p^{p-1}).
$$
\end{proof}
Then Corollary \ref{C1} yields the existence of a sequence
$\left\{w_n\right\}_{n \in \mathbb{N}}$ in $W_0^{1, p}(\Omega)$ and
of a function $w\in W_0^{1, p}(\Omega)$ such that conditions
$(\tilde{a})$-$(\tilde{c})$  hold. The conclusion of Theorem
\ref{T2} is thus achieved.
 \qed

Our second application deals with the following Dirichlet problem
involving competing operators, convection and convolution with
$\rho\in L^1(\mathbb{R}^N)$:
\begin{equation}\label{P4}
\begin{cases}-\Delta_p u + \Delta_q u=f(x,\rho\ast u, \nabla(\rho\ast u)) & \text { in } \Omega \\
u=u_0 & \text { on } \partial \Omega.\end{cases}
\end{equation}
The data $\Omega$, $-\Delta_p$, $-\Delta_q$, and $f(x,s,\xi)$ are as
in the statement of problem \eqref{P1}. In \eqref{P4} we have the
convolution
\begin{eqnarray*}
\rho\ast u(x)=\int_{\mathbb{R}^N}\rho(x-y)u(y)dy \quad \text{for all
$x\in \mathbb{R}^N$},
\end{eqnarray*}
of $\rho\in L^1(\mathbb{R}^N)$ and a $u\in W_0^{1,p}(\Omega)$
regarded as belonging to $W^{1,p}(\mathbb{R}^N)$ by identifying $u$
with its extension by $0$ outside $\Omega$.

The Sobolev-Galiardo-Nirenberg theorem (see \cite[Theorem
9.9]{Brezis}) provides the existsence of a constant $S=S(p,N)$ such
that
\begin{equation}\label{3}
\Vert v\Vert_{L^{p^*}(\mathbb{R}^N)}\leq S\Vert\nabla v
\Vert_{L^p(\mathbb{R}^N)},\quad \forall v\in W^{1, p}(\mathbb{R}^N).
\end{equation}

Our result on problem \eqref{P4} reads as follows.

\begin{theorem}\label{T3}
Assume that condition $(H1)$ is satisfied with $\alpha=\beta=p-1$
and
$$
N^{p-1}\Vert \rho
\Vert_{L^1(\mathbb{R}^N)}^{p-1}\left(a_1S^{p-1}S_{\frac{p^*}{p^*-p+1}}+a_2S_1\right)<
1.
$$
Then there exists a strong generalized solution $u\in
W_0^{1,p}(\Omega)$ to problem \eqref{P3} in the sense of Definition
\ref{D1} corresponding to the operator $T:W_0^{1, p}(\Omega)
\rightarrow W^{1, p}(\Omega)$ given by $Tv=\rho\ast v$ for all $v\in
W_0^{1, p}(\Omega)$.
\end{theorem}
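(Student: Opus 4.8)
The plan is to view Theorem~\ref{T3} as the special case of Theorem~\ref{T1} in which the intrinsic operator is the convolution $Tv=\rho\ast v$ and the boundary datum is homogeneous, so that the entire proof amounts to checking that this $T$ maps $W_0^{1,p}(\Omega)$ continuously into $W^{1,p}(\Omega)$ and satisfies hypothesis $(H2)$ with $\alpha=\beta=p-1$. First I would fix $v\in W_0^{1,p}(\Omega)$ and identify it with its extension by $0$ to $\mathbb{R}^N$, which lies in $W^{1,p}(\mathbb{R}^N)$ because $\partial\Omega$ is Lipschitz. Classical properties of convolution then give $\rho\ast v\in W^{1,p}(\mathbb{R}^N)$ together with the commutation relation $\partial_i(\rho\ast v)=\rho\ast\partial_i v$, so that the restriction of $Tv$ to $\Omega$ belongs to $W^{1,p}(\Omega)$; since convolution is linear in $v$, the norm bounds obtained below immediately furnish the continuity of $T$.

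The heart of the argument is the verification of $(H2)$. For the gradient term I would bound the Euclidean length of the gradient by the sum of its components, $|\nabla(\rho\ast v)|\le\sum_{i=1}^N|\rho\ast\partial_i v|$, and then apply Young's convolution inequality together with $\|\partial_i v\|_p\le\|\nabla v\|_p$ to obtain
\[
\|\nabla T(v)\|_p\le N\,\|\rho\|_{L^1(\mathbb{R}^N)}\,\|\nabla v\|_p,
\]
hence the second inequality in $(H2)$ with $K_2=N^{p-1}\|\rho\|_{L^1(\mathbb{R}^N)}^{\,p-1}$ and $K_3=0$. For the term controlling $T(v)$ itself I would chain the Sobolev inequality \eqref{3} on all of $\mathbb{R}^N$ with the previous estimate,
\[
\|T(v)\|_{p^*}\le\|\rho\ast v\|_{L^{p^*}(\mathbb{R}^N)}\le S\,\|\nabla(\rho\ast v)\|_{L^p(\mathbb{R}^N)}\le S\,N\,\|\rho\|_{L^1(\mathbb{R}^N)}\,\|\nabla v\|_p,
\]
which yields the first inequality in $(H2)$ with $K_1=S^{p-1}N^{p-1}\|\rho\|_{L^1(\mathbb{R}^N)}^{\,p-1}$ and $K_3=0$.

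Substituting these constants (with $K_3=0$) into the smallness requirement $a_1K_1S_{p^*/(p^*-\alpha)}+a_2K_2S_{p/(p-\beta)}<1$ of $(H2)$, evaluated at $\alpha=\beta=p-1$, reproduces exactly the inequality postulated in Theorem~\ref{T3}. Thus all hypotheses of Theorem~\ref{T1} are met for $Tv=\rho\ast v$, and Theorem~\ref{T1} delivers the desired strong generalized solution in the sense of Definition~\ref{D1}.

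I expect the only genuinely delicate point to be the opening step: one must be sure that extension by zero keeps $v$ in $W^{1,p}(\mathbb{R}^N)$ and that differentiation really passes through the convolution, because both the Sobolev inequality \eqref{3} and the gradient bound are invoked on the whole space $\mathbb{R}^N$ rather than on $\Omega$ --- this is precisely why the global constant $S$ from \eqref{3}, and not the constant $S_{p^*}$ of the embedding $W_0^{1,p}(\Omega)\hookrightarrow L^{p^*}(\Omega)$, enters the estimate, since $\rho\ast v$ need not vanish outside $\Omega$. The remaining bookkeeping of the factor $N$ arising from the vector nature of $\nabla$ is what pins down the exact constant appearing in the smallness condition.
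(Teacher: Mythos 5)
Your proposal is correct and follows essentially the same route as the paper's own proof: extension by zero, the commutation $\partial_i(\rho\ast v)=\rho\ast\partial_i v$, Young's convolution inequality, and the whole-space Sobolev inequality \eqref{3}, yielding precisely the paper's constants $K_1=S^{p-1}N^{p-1}\|\rho\|_{L^1(\mathbb{R}^N)}^{p-1}$, $K_2=N^{p-1}\|\rho\|_{L^1(\mathbb{R}^N)}^{p-1}$, $K_3=0$ in $(H2)$, after which Theorem \ref{T1} applies. The only cosmetic differences are that you obtain the factor $N$ via Minkowski's inequality where the paper uses convexity of $t\mapsto t^p$ (same constant), and you omit the paper's one-line check that $\alpha=\beta=p-1$ lies in the admissible ranges of $(H1)$, i.e.\ $p-1<p^*-1$ and $p-1<p/(p^*)'$.
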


\begin{proof}
As already pointed out in the proof of Theorem \ref{T2}, it is
permitted to take $\alpha=\beta=p-1$ in condition $(H1)$.  Using
Young's theorem for convolution
\begin{equation*}
\Vert \rho \ast v \Vert_{L^p(\mathbb{R}^N)}\leq \Vert \rho
\Vert_{L^1(\mathbb{R}^N)} \Vert v \Vert_{L^p(\mathbb{R}^N)}
\end{equation*}
and the formula
\begin{eqnarray}\label{2}
\frac{\partial}{\partial x_i}(\rho\ast v)=\rho\ast \frac{\partial
v}{\partial x_i}
\end{eqnarray}
for all $v\in W_0^{1, p}(\Omega)$ and $i=1,\dots,N$ (see
\cite[Theorem 4.15]{Brezis}), it turns out that the linear operator
$Tv=\rho\ast v$ is well defined and continuous from $W_0^{1,
p}(\Omega)$ into $W^{1, p}(\Omega)$.

By the convexity of the function $t\mapsto t^p$ on $(0,+\infty)$ and
\eqref{2}, it follows that
\begin{eqnarray*}
&&\Vert\nabla(\rho\ast u)\Vert_{L^p(\mathbb{R}^N)}^p=
\int_{\mathbb{R}^N}\vert \nabla(\rho \ast u) \vert^p
dx=\int_{\mathbb{R}^N}
\left(\sum\limits_{i=1}^N\left(\rho\ast\frac{\partial u}{\partial
x_i}\right)^2\right)^{\frac{p}{2}}dx\\
&&\leq \int_{\mathbb{R}^N}\left(\sum\limits_{i=1}^N\left\vert \rho
\ast \frac{\partial u}{\partial x_i}\right\vert\right)^p dx\leq N^p
\Vert \rho \Vert_{L^1(\mathbb{R}^N)}^p \Vert\nabla u
\Vert_{L^p(\mathbb{R}^N)}^p=N^p \Vert \rho
\Vert_{L^1(\mathbb{R}^N)}^p \Vert\nabla u \Vert_p^p.
\end{eqnarray*}
Then by \eqref{3} we derive
$$
\|\rho\ast u\|_{p^*}^{p-1}\leq \|\rho\ast
u\|_{L^{p*}(\mathbb{R}^N)}^{p-1}\leq S^{p-1}\|\nabla(\rho\ast
u)\|_{L^p(\mathbb{R}^N)}^{p-1} \leq S^{p-1}N^{p-1}\Vert \rho
\Vert_{L^1(\mathbb{R}^N)}^{p-1}\|\nabla u\|_p^{p-1}
$$
and
$$
\|\nabla(\rho\ast u)\|_{L^p(\Omega)}^{p-1}\leq \|\nabla(\rho\ast
u)\|_{L^{p}(\mathbb{R}^N)}^{p-1} \leq N^{p-1}\Vert \rho
\Vert_{L^1(\mathbb{R}^N)}^{p-1}\|\nabla u\|_p^{p-1}
$$
for all $u \in W_0^{1, p}(\Omega)$. Hypothesis $(H_2)$ is thus
verified with $K_1=S^{p-1}N^{p-1}\Vert \rho
\Vert_{L^1(\mathbb{R}^N)}^{p-1}$, $K_2=N^{p-1}\Vert \rho
\Vert_{L^1(\mathbb{R}^N)}^{p-1}$, $K_3=0$. Hence Theorem \ref{T1}
can be invoked with the choice of $T$ described above through which
we reach the stated conclusion.
\end{proof}
\qed

\noindent  \textbf{Acknowledgements:} The authors thank Luiz
Fernando Faria and Anderson Luiz de Albuquerque Araujo for useful
conmments.

\noindent \textbf{Disclosure statement:}
On behalf of all authors, the corresponding author states that there is no conflict of interest.

\noindent  \textbf{Funding:} The authors take part in the project
Special Visiting Researcher – FAPEMIG CEX APQ 04528/22.

\end{document}